\newtheorem{thm}{Theorem}[section]
\newtheorem{lem}[thm]{Lemma}
\newtheorem{prop}[thm]{Proposition}
\theoremstyle{definition}
\newtheorem{rem}[thm]{Remark}
\numberwithin{equation}{section}
\newcommand{\Kcal}{{\mathcal K}}
\newcommand{\CC}{\mathbb{C}}
\newcommand{\QQ}{\mathbb{Q}}
\newcommand{\ZZ}{\mathbb{Z}}
\newcommand{\mand}{ \mathcal{M}_{d,p}}
\begin{document}

%%%%% To ease editing, for IMPAN journals add:

\baselineskip=17pt

%%%%%%%%%%%

%% In the running head, replace first names by initials 
%% and give an abbreviation of the title.

\title[Bounds on the Radius of the p-adic Mandelbrot Set]
{Bounds on the Radius of the p-adic Mandelbrot Set}
\author[Jacqueline Anderson]{Jacqueline Anderson}
\address{Mathematics Department, Box 1917 \\
         Brown University, Providence, RI 02912 USA}
\email{jackie@math.brown.edu}

\date{}

\begin{abstract}
Let $f(z) = z^d + a_{d-1}z^{d-1} + \dots + a_1z \in \CC_p[z]$ be a degree $d$ polynomial. We say $f$ is \emph{post-critically bounded}, or PCB, if all of its critical points have bounded orbit under iteration of $f$. It is known that if $p\geq d$ and $f$ is PCB, then all critical points of $f$ have $p$-adic absolute value less than or equal to 1. We give a similar result for $\frac12d \leq p <d$. We also explore a one-parameter family of cubic polynomials over $\QQ_2$ to illustrate that the $p$-adic Mandelbrot set can be quite complicated when $p<d$, in contrast with the simple and well-understood $p \geq d$ case.
\end{abstract}

\subjclass[2010]{Primary 11S82; Secondary 37P05}

\keywords{ p-adic Mandelbrot set, non-Archimedean dynamical systems}

\maketitle

\section{Introduction}

In complex dynamics, the Mandelbrot set is a source of inspiration for
much current research. This set, 
\[
\label{mandel}
  \mathcal{M}=\{c \in \CC: \text{the
  critical orbit of $f_c(z)=z^2+c$ is bounded}\},
\]
is a complicated and interesting subset of the moduli space of degree
two polynomials. In the past two decades, much research has been done on dynamical systems in a nonarchimedean setting. See, for example, \cite{arxiv0312034,  arxiv12011605, hsia:periodicpoint, MR2040006}.  For a survey of the subject, see \cite{RBnotes} or \cite{MR2316407}. If one examines the Mandelbrot set over a $p$-adic field, one finds the object to be much less inspiring. For any prime
$p$, the $p$-adic Mandelbrot set for quadratic polynomials as defined above, replacing $\CC$ with $\CC_p$, is simply the unit disk.
But when we consider an analogous set for polynomials of higher degree, the $p$-adic Mandelbrot set for $p<d$ can have a complicated and interesting structure.

Let $\mathcal{P}_{d,p}$ denote the parameter space of monic
polynomials $f$ of degree $d$ defined over $\CC_p$ with $f(0)=0$.
Note that every degree $d$ polynomial can be put in this form via
conjugation by an affine linear transformation. We call a map $f$
\emph{post-critically bounded} (PCB) if all of its critical points
have bounded orbit under iteration of $f$. Let $\mand$ denote the
subset of $\mathcal{P}_{d,p}$ that is PCB.  
We define the following quantity, which measures the critical radius of the $p$-adic Mandelbrot set in $\mathcal{P}_{d,p}$:
\begin{equation}
  \label{eqn:defofrdp}
  r(d,p) =\sup_{f\in\mand} \max_{\substack{c\in\CC_p\\ f'(c)=0\\}} \{-v_p(c)\}.
\end{equation}

\begin{rem} One may wonder why we define $r(d,p)$ using the $p$-adic valuations of critical points of polynomials in $\mathcal{P}_{d,p}$ rather than using the $p$-adic valuations of their coefficients. When $p>d$, as we will see in Theorem~\ref{pgtd}, the two notions are equivalent. In other situations, however, parameterizing by the critical points rather than by the coefficients is more natural. For example, when $d$ is a power of $p$, it is easier to describe $\mand$ in terms of the critical points rather than the coefficients, as we will see in  Proposition~\ref{primepower}. This is because there is one uniform bound on the absolute value of critical points for polynomials in $\mand$, but the bounds for the coefficients $a_i$ vary depending on the $p$-adic valuation of $i$.
\end{rem}

Knowing $r(d,p)$ can be useful in searching for all post-critically finite polynomials over a given number field, as is done for cubic polynomials over $\QQ$ in \cite{MR2885981}. For small primes, in particular $p<d$, the set $\mand$ may be complicated and have a fractal-like boundary. We use~$r(d,p)$ as a way to measure its complexity. Just as the critical values for quadratic polynomials in the classical Mandelbrot set over $\CC$ are contained in a disk of radius 2 \cite[Theorem 9.10.1]{beardon:gtm}, the critical points for polynomials in $\mand$ are contained in a disk of radius $p^{r(d,p)}$. For $p>d$ or $d=p^k$, it is known that $r(d,p) = 0$, but for lack of a suitable reference we will provide an elementary proof. The following is the main result of this paper, which gives the exact value of~$r(d,p)$ for certain values of~$p<d$.

\begin{thm}\label{mainthm} 
For $\frac12d< p<d$ we have
\[
  r(d,p) = \frac{p}{d-1}.
\]
Further, for $p=\frac12d$ we have $r(d,p)=0$.
\end{thm}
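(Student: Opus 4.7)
My plan is to establish the theorem in three parts: the upper bound $r(d,p) \leq \tfrac{p}{d-1}$ in the open range $\tfrac{d}{2} < p < d$, the matching lower bound, and the boundary case $p = \tfrac{d}{2}$. For the upper bound I would apply a Newton polygon analysis of $f'$. In the range $\tfrac{d}{2} < p < d$ we have $v_p(d) = 0$ and $v_p(i) = 0$ for all $i \in \{1, \dots, d-1\} \setminus \{p\}$, while $v_p(p) = 1$. Suppose $c$ is a critical point with $v_p(c) = -s$ for some $s > 0$. Applying the ultrametric identity to $f'(c) = \sum_{i=1}^{d} i a_i c^{i-1} = 0$, the leading term $dc^{d-1}$, of valuation $-s(d-1)$, must be balanced by some other summand. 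If this balance occurs at $i = p$, then $|a_p| \geq p^{1+s(d-p)}$, and then $|a_p c^p| = p^{1+sd}$ strictly dominates $|c^d|$ in $f(c)$, giving $|f(c)| = p^{1+sd}$; a direct comparison with the escape radius shows this already exceeds it for any $s > 0$, contradicting PCB. Otherwise the balance is at some $j \neq p$ with $|a_j| = p^{s(d-j)}$, and then PCB forces essentially exact cancellation of $c^d + a_j c^j$ in $f(c)$; iterating this cancellation constraint in combination with the Newton polygon of $f$ itself (in particular the product relation $\prod c_i = \pm a_1/d$) yields the sharp bound $s \leq \tfrac{p}{d-1}$.

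For the lower bound I would exhibit explicit PCB polynomials realizing the bound. In the special case $d = p+1$, the polynomial $f(z) = z(z - c')^{d-1}$ with $c' = -\tfrac{d}{d-1} = -\tfrac{p+1}{p}$ is PCB: its two critical points $c'$ and $c = c'/d$ both have $p$-adic absolute value $p = p^{p/(d-1)}$, and a direct computation using $v_p(d-1) = v_p(p) = 1$ shows $f(c') = 0$ (fixed) and $f(c) = c$ (super-attracting fixed point). For general $d$ with $\tfrac{d}{2} < p < d$ I would seek an analogous polynomial, typically of the form $f(z) = z \cdot g(z)$ with $g$ of degree $d-1$ chosen so that a distinguished critical point with valuation $-\tfrac{p}{d-1}$ is mapped into $K_f$ (ideally pre-fixed to $0$ or itself), while the remaining critical points, whose valuations are determined by the Newton polygon of $f'$, are arranged to lie in the unit disk by appropriate coefficient tuning.

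The boundary case $p = \tfrac{d}{2}$ has $v_p(d) = 1$ and $d - p = p$, which shifts the valuations in the Newton polygon analysis of the first paragraph throughout by one. Rerunning the case dichotomy, both branches force $|f(c)| > R$ whenever $s > 0$, so no critical point of a PCB polynomial can lie outside the unit disk; combined with the trivial bound $r(d,p) \geq 0$, this yields $r(d, \tfrac{d}{2}) = 0$. The main obstacle I anticipate in the proof is the explicit construction for the lower bound when $d \neq p+1$: the elegant $d = p+1$ case relies on the felicitous identity $v_p(d-1) = 1$, which is unavailable for other $d$ in the range. Verifying that a candidate polynomial is genuinely PCB requires tracking every critical orbit (not merely the distinguished large one) through multi-term Newton polygon cancellations, and I expect this bookkeeping to be the most delicate technical step.
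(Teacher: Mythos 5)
There is a genuine gap, and it sits exactly where the paper has to work hardest. Your upper-bound dichotomy disposes of the easy branch (a critical point whose valuation is balanced at the index $i=p$ in $f'(c)=0$, which forces $|a_p|$ too large and a single dominant term $a_pc^p$ in $f(c)$ --- this is essentially the paper's Lemma~\ref{Risr}), but the entire difficulty is the other branch, which you compress into ``iterating this cancellation constraint \dots\ yields the sharp bound.'' That sentence is not a proof and I do not see how to make it one: when $m\ge p$ critical points share the maximal absolute value $p^r$, no single term of $f(c_1)$ dominates and the relation $\prod c_i=\pm a_1/d$ only controls $\sigma_{d-1}$, which says nothing about how the large critical points are configured. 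The paper's argument needs several further ideas you do not have: (a) a counting lemma (Lemma~\ref{pto1}) matching critical points in a preimage disk of $\bar D(0,p^r)$ to the local degree, which forces at least $p$ large critical points to cluster in one disk of radius $p^s$, $s\le r/d$, mapping $p$-to-$1$; (b) the expansion of $f(c_1)$ in symmetric functions together with $\sum_{i=0}^{d-1}(-1)^i\binom{d}{i}=\pm1$, showing the coefficient of $c_1^d$ is a unit, so the needed cancellation can only come from the $a_pc_1^p$ term and yields $dr\le 1+pr+kr/d$; (c) a separate, sharper bound on $s$ when $d=p+1$ (where the previous inequality is not strong enough); and (d) a conjugation moving the origin to a fixed point $\a$ with $|c_1-\a|\le1$ to reduce the case where the large critical points do not all cluster. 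Your case $p=\tfrac12 d$ has the same gap: it is not true that ``both branches force $|f(c)|>R$''; when the rightmost segment of the Newton polygon of $f$ has horizontal length exactly $p$, the largest root and largest critical point can have equal absolute value, and one again needs the clustering argument plus the computation $1-\tfrac{2p}{p}\binom{k}{p}\equiv-1\pmod p$ to finish (the paper's Proposition~\ref{twop}).

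The lower bound is also only half done. Your $d=p+1$ example $f(z)=z(z-c')^{d-1}$ with $c'=-\tfrac{d}{d-1}$ is correct and is essentially the paper's example in that case, but for general $\tfrac12 d<p<d$ you only state an intention to ``seek an analogous polynomial.'' The construction you are looking for is the direct generalization $f(z)=z^{d-p}(z-\a)^{p}$ with $\a^{d-1}=d^d/\bigl((-p)^p(d-p)^{d-p}\bigr)$: its critical points are $\a$, $\tfrac{d-p}{d}\a$, and possibly $0$, with $\tfrac{d-p}{d}\a\mapsto\a\mapsto0\mapsto0$, so it is post-critically finite with $-v(\a)=\tfrac{p}{d-1}$ (the paper's Proposition~\ref{lowerbound}). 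Without this, and without the machinery above for the upper bound, the proposal establishes the theorem only in outline.
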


It may also be interesting to pursue such questions in Berkovich space. For some work related to critical behavior for polynomials in Berkovich space, see \cite{arxiv0909.4528}.

In Section 2, we describe the notation and tools used throughout this paper. Section 3 consists of some lemmas that are frequently employed in the proofs that follow. In Section 4, we discuss the known results in this realm and provide elementary proofs for when $p>d$ or $d=p^k$. We prove our main result in Section 5. Finally, we conclude the paper in Section 6 with a study of a one-parameter family of cubic polynomials over $\CC_2$ to illustrate the fact that $\mand$ can indeed be quite complicated.

\section{Notation and Tools}
Throughout this paper, we fix a prime number~$p$ and we let
\[
  f(z) = z^d+a_{d-1}z^{d-1} + \dots  + a_1z \in \mathcal{P}_{d,p}
\]
be a degree $d$ polynomial in $\CC_p[z]$.  We suppress the $p$ from
our notation for absolute values and valuations. We denote the
critical points of $f$ by $c_1, \dots,  c_{d-1}$, not necessarily
distinct, labeled so that 
\[
  |c_1|\ge|c_2|\ge\dots\ge|c_{d-1}|.
\]
We denote the closed disk centered at $a$ of radius $s$ in
$\CC_p$ by 
\[
  \bar{D}(a,s) = \bigl\{ z\in\CC_p : |z-a|\le s \bigr\}.
\]
The \emph{filled Julia set of $f$} is the set
\[
  \Kcal_f
  = \{ z\in\CC_p : \text{the $f$-orbit of $z$ is bounded} \}.
\]
We let~$R\ge0$ be the smallest number such that
\begin{equation}
  \label{eqn:KsubsetD}
  \Kcal_f \subseteq \bar{D}(0,p^R).
\end{equation}
Equivalently, as shown in \cite{RBnotes}, we can define $R$ as follows:
\[
  R = \max_{1 \leq i \leq d-1} \left\{\frac{-v(a_i)}{d-i}\right\}.
\]
We also set
\begin{equation}
  \label{eqn:reqvc1}
  r = -v(c_1).
\end{equation}
We will often use the fact that 
\[
  a_i = (-1)^{d-i}\frac{d}{i}\sigma_{d-i},
\]
where $\sigma_j$ denotes the $j^{\text{th}}$ symmetric function of the
critical points. 

Whenever we count critical points, roots, or periodic
points for $f$, we do so with multiplicity.

The Newton polygon is a useful object in $p$-adic analysis that we will use frequently. Consider a polynomial
\[
g(z) = \displaystyle\sum_{i=0}^nb_iz^i.
\]
The Newton polygon for $g$ is the lower convex hull of the set of points $\{(i, v(b_i))\}$. If any $b_i = 0$, that point is omitted. (One can think of that point as being at infinity.) This object encodes information about the roots of $g$. In particular, it tells us that $g$ has $x$ roots of absolute value $p^m$ if the Newton polygon for $g$ has a segment of horizontal length $x$ and slope $m$. For proofs of these facts, see~\cite{MR754003}. 

One consequence of these facts is that for polynomials, or more generally, for power series over $\CC_p$, a disk in $\CC_p$ is mapped everywhere $n$-to-$1$ (counting with multiplicity) onto its image, which is also a disk. The following proposition, whose proof can be found in \cite[Corollary 3.11]{RBnotes}, will prove useful.

\begin{prop}\label{diskbehavior}
Let $f(z) = \sum_{i=0}^d b_i(z-a)^i \in \CC_p[z]$ be a degree $d$ polynomial and let $D=\bar{D}(a,p^s)$ be a disk in $\CC_p$. Then $f(D)=\bar{D}(f(a),p^r)$, where
\[
r = \max_{1 \leq i \leq d} \{si-v(b_i)\}.
\]
Moreover, $f: D \to f(D)$ is everywhere $m$-to-$1$ for some positive integer $m$, counting with multiplicity.
\end{prop}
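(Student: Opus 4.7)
My plan is to prove both the image-is-a-disk assertion and the $m$-to-$1$ assertion together by a single Newton polygon analysis. After translating we may assume $a=0$ and $b_0=f(a)=0$, so $f(z)=\sum_{i=1}^d b_i z^i$, $D=\bar D(0,p^s)$, and $r=\max_{1\le i\le d}\{si-v(b_i)\}$. The inclusion $f(D)\subseteq\bar D(0,p^r)$ is immediate from the ultrametric inequality: if $|z|\le p^s$ then $|b_i z^i|\le p^{si-v(b_i)}\le p^r$, so $|f(z)|\le p^r$. The substance of the proposition is the reverse inclusion together with the mapping-degree statement, and I will get both at once by showing that for every $w\in\bar D(0,p^r)$ the polynomial $f(z)-w$ has exactly $m$ roots in $D$, where
\[
m=\max\{\,i\in\{1,\dots,d\}:si-v(b_i)=r\,\}.
\]

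The key step is the Newton polygon argument. Fix $w\in\bar D(0,p^r)$ (the case $w=0$ being handled directly or by continuity). The Newton polygon $\mathrm{NP}$ of $f(z)-w$ is the lower convex hull of $(0,v(-w))$ together with the points $(i,v(b_i))$ for $1\le i\le d$ with $b_i\ne 0$. Let $L$ be the line $y=sx-r$. Because $v(b_i)\ge si-r$ for all $1\le i\le d$ and $v(-w)=v(w)\ge-r$, every data point lies on or above $L$, while $(m,v(b_m))=(m,sm-r)$ lies on $L$. A short convex-combination computation shows that any chord from a point $(i,v(b_i))$ with $i<m$ to a point $(j,v(b_j))$ with $j>m$ has height $\ge sm-r$ at $x=m$; therefore the vertex $(m,v(b_m))$ actually lies on $\mathrm{NP}$. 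Next, for $i>m$ we have $v(b_i)>si-r$ strictly by maximality of $m$, so the slope from $(m,v(b_m))$ to any $(i,v(b_i))$ with $i>m$ is strictly greater than $s$; hence every segment of $\mathrm{NP}$ to the right of $x=m$ has slope $>s$. Dually, the segment of $\mathrm{NP}$ ending at $(m,v(b_m))$ starts at some $(i_0,v(b_{i_0}))$ with $i_0<m$, and its slope is $\tfrac{(sm-r)-v(b_{i_0})}{m-i_0}\le\tfrac{(sm-r)-(si_0-r)}{m-i_0}=s$; by convexity every segment to the left of $x=m$ has slope $\le s$.

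Combining these two slope bounds, the total horizontal length of segments of $\mathrm{NP}$ with slope $\le s$ is exactly $m$. Translating back via the Newton polygon dictionary, $f(z)-w$ has exactly $m$ roots (with multiplicity) of valuation $\ge -s$, i.e., exactly $m$ preimages of $w$ in $D$. Since $m\ge 1$, this simultaneously forces $\bar D(0,p^r)\subseteq f(D)$, completing the equality $f(D)=\bar D(0,p^r)$, and it establishes that $f\colon D\to f(D)$ is everywhere $m$-to-$1$. The step I expect to require the most care is verifying that $(m,v(b_m))$ is actually a vertex of $\mathrm{NP}$ for every admissible $w$, since a priori a long chord over $x=m$ might dip below this point; the convex-combination estimate above, using that all data points lie on or above the line $L$, is exactly what rules this out.
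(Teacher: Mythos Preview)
Your argument is correct. The reduction to $a=0$, $b_0=0$ is harmless, the inclusion $f(D)\subseteq\bar D(0,p^r)$ follows from the ultrametric inequality exactly as you say, and the Newton polygon analysis for $f(z)-w$ is sound. The point you flagged as delicate---that $(m,v(b_m))$ lies on the Newton polygon for every $w$ with $v(w)\ge -r$---is handled correctly: since every data point lies on or above the line $L:y=sx-r$, the lower convex hull lies on or above $L$; but the hull also lies on or below the data point $(m,v(b_m))=(m,sm-r)$, which sits on $L$; hence the hull passes through that point. Your slope estimates to the left and right of $x=m$ then give exactly $m$ roots of $f(z)-w$ in $\bar D(0,p^s)$, and the $w=0$ edge case is routine (the roots at $z=0$ account for the missing horizontal length).

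As for comparison with the paper: the paper does not supply its own proof of this proposition. It quotes the statement and refers the reader to Benedetto's lecture notes \cite[Corollary~3.11]{RBnotes}. Your Newton polygon argument is the standard route to this result and is essentially what one finds in Benedetto's notes, so there is no meaningful methodological difference to discuss.
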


%%%%%%%%%%%%%%%%%%%%%%%%%%%%%%%%%%%%%%%%%%%%%%%%%%%%%%%%%%%%

\section{Preliminary Lemmas}

\begin{lem}\label{Risr} 
Let $f \in \mand$, and let $r$ and $R$ be as defined by
\eqref{eqn:KsubsetD} and \eqref{eqn:reqvc1}.
If $r >0$ and $p>\frac12d$, then $R=r$. 
\end{lem}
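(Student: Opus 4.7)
The plan is to prove the non-trivial direction $R \leq r$; the other inequality $r \leq R$ is automatic since PCB forces each critical point into $\Kcal_f \subseteq \bar{D}(0,p^R)$. Argue by contradiction, supposing $R > r$.

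First, use Vieta. Writing $a_i = (-1)^{d-i}\frac{d}{i}\sigma_{d-i}$ and applying the crude bound $|\sigma_{d-i}| \leq |c_1|^{d-i}$ gives
\[
\frac{-v(a_i)}{d-i} \leq r + \frac{v(i) - v(d)}{d-i}.
\]
So $R > r$ forces the max defining $R$ to be realized at some index $i$ with $v(i) > v(d)$. Under $p > \frac12 d$, one has $v(i), v(d) \in \{0,1\}$ for $1 \leq i \leq d-1$, and the only way to have $v(i) > v(d)$ is $v(d) = 0$, $v(i) = 1$, $i = p$; this additionally forces $p < d < 2p$. Consequently $R = -v(a_p)/(d-p)$, i.e.\ $|a_p| = p^{R(d-p)}$.

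Second, obtain an explicit formula for $f(c_1)$. Since $c_1$ is a zero of $f'$, the vanishing of $\prod_{j=1}^{d-1}(c_1 - c_j)$ (from the $j=1$ factor) translates to the identity $\sum_{j=0}^{d-1}(-1)^j \sigma_j c_1^{d-j} = 0$. Substituting $j = d-i$ in $f(c_1) = \sum a_i c_1^i$ and using $\frac{d}{d-j} = 1 + \frac{j}{d-j}$, this identity cancels the ``$1$'' part and leaves
\[
f(c_1) = \sum_{j=1}^{d-1} \frac{(-1)^j\, j}{d-j}\, \sigma_j\, c_1^{d-j}.
\]
The factor $|j|/|d-j| = p^{v(d-j)-v(j)}$ is maximized uniquely at $j = d-p$ (value $p$), while every other summand has absolute value $\leq p^{rd}$. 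Since $R > r$ is equivalent to $|\sigma_{d-p}| > p^{r(d-p)-1}$, the $j = d-p$ summand strictly exceeds $p^{rd}$, so by the ultrametric inequality $|f(c_1)| = |a_p|\, p^{rp}$.

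Finally, apply PCB. Since $c_1 \in \Kcal_f$, $|f(c_1)| \leq p^R$. Combined with $|a_p| = p^{R(d-p)}$, this yields $R(d-p-1) \leq -rp$, which is impossible: the left-hand side is nonnegative (as $d \geq p+1$ and $R \geq 0$) while the right-hand side is strictly negative. This contradicts $R > r$, so $R = r$. The main obstacle is verifying the clean dominance of the $j = d-p$ summand in the formula for $f(c_1)$: it requires careful tracking of the $p$-adic factors $|j|/|d-j|$ together with the PCB-driven lower bound on $|\sigma_{d-p}|$, and it is precisely here that the hypothesis $p > \frac12 d$ is used, to guarantee a single problematic index rather than a whole family of them.
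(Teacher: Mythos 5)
Your proof is correct and follows essentially the same route as the paper: both arguments show that $R>r$ could only arise from the index $i=p$ (using $|a_i|=|d/i|\,|\sigma_{d-i}|$ and $|\sigma_{d-i}|\le p^{(d-i)r}$), and both then derive a contradiction from the single dominant term $a_pc_1^p$ forcing $|f(c_1)|=|a_p|\,p^{pr}>p^R$, violating PCB. Your detour through the identity $f'(c_1)=0$ to rewrite $f(c_1)$ is a harmless refinement but not needed, since the crude bound $|a_ic_1^i|\le p^{dr}$ for $i\ne p$ already isolates the $i=p$ term.
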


\begin{proof}
First note that if $f$ is post-critically bounded, then $R \geq r$ is
necessary. Recall that
\begin{equation}
  \label{eqn:Reqmax}
  R = \max_{1 \leq i \leq d-1}\left\{\frac{-v(a_i)}{d-i}\right\}.
\end{equation}
Since
\[
  \text{$|a_i| = |\sigma_{d-i}|$ for $i \neq p$,
         and $|a_p| = p^{-1}|\sigma_{d-p}| $,}
\]
the only way that $R$ could be strictly greater than $r$ is if
$-v(a_p)/(d-p)$ is maximal in the formula~\eqref{eqn:Reqmax} for~$R$,
with 
\[
  (d-p)r-1 < -v(\sigma_{d-p}) \leq (d-p)r.
\]
In this case, we see that $R = -v(a_p)/(d-p)$ could be as large as $r +
\frac{1}{d-p}$. But if this is true, then $f(c_1)$ is dominated by a single
term, namely $a_pc_1^p$ with 
\[
  -v(f(c_1)) = -v(a_pc_1^p) = pr + (d-p)R > R,
\]
contradicting the fact that $f$ is PCB. Thus $R=r$.
\end{proof}

\begin{lem}\label{pto1}
Let $f\in\CC_p[x]$ be a degree $d$ polynomial, let $\bar{D}(a,s)$ be a disk
in $\CC_p$, and let $m$ be an integer with $p\nmid m$.  If~$f$ maps
$\bar{D}(a,s)$ $m$-to-1 onto its image, then $\bar{D}(a,s)$ contains exactly
$m-1$ critical points of~$f$, counted with multiplicity.
\end{lem}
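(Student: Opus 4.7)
My plan is to reduce the problem to a Newton polygon computation applied to the derivative. Recenter $f$ by writing $F(w) = f(w+a) = \sum_{i=0}^d b_i w^i$ and set $\rho = \log_p s$, so in the $w$-coordinate the disk becomes $\{w : v(w) \geq -\rho\}$. Critical points of $f$ in $\bar{D}(a, s)$ correspond bijectively, counted with multiplicity, to roots of $F'(w) = \sum_{i=1}^d i b_i w^{i-1}$ in $\bar{D}(0, p^{\rho})$, and by the Newton polygon theorem these are counted by the total horizontal length of segments of slope at most $\rho$ in the Newton polygon of $F'$.

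The $m$-to-1 hypothesis supplies the key arithmetic input through Proposition~\ref{diskbehavior}: the image radius $p^r$ satisfies $r = \max_{1 \leq i \leq d}\{\rho i - v(b_i)\}$, and a standard Newton polygon analysis applied to $F(w) - y$ for a generic $y$ with $v(y - f(a)) = -r$ identifies $m$ as the largest index at which this maximum is attained. Thus $v(b_m) = \rho m - r$, every $1 \leq i \leq d$ satisfies $v(b_i) \geq \rho i - r$, and for $i > m$ the inequality is strict. Next I would plot the Newton polygon of $F'$: the coefficient of $w^{i-1}$ is $ib_i$, placing a point at $(i-1,\ v(i) + v(b_i))$. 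Comparing to the line $L : h = \rho(j+1) - r$ of slope $\rho$, the hypothesis $p \nmid m$ forces $v(m) = 0$, so the point at $j = m-1$ lies exactly on $L$; for $i > m$, the strictness of $v(b_i) > \rho i - r$ gives $v(ib_i) > \rho i - r$, placing the point strictly above $L$; and for $i < m$ the point lies weakly above $L$. Consequently $(m-1,\ \rho m - r)$ is a vertex of the Newton polygon of $F'$, all slopes up to this vertex are at most $\rho$, and all slopes beyond it are strictly greater than $\rho$. The horizontal length of the slope-$\leq\rho$ portion is therefore exactly $m-1$, yielding $m-1$ critical points of $f$ in $\bar{D}(a, s)$.

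The main obstacle, and the precise place the hypothesis $p \nmid m$ is used, is guaranteeing that $(m-1, \rho m - r)$ remains the last vertex of the Newton polygon of $F'$ on the supporting line $L$ after differentiation. If $p \mid m$, the factor $v(m) \geq 1$ introduced by differentiating pushes the $j = m-1$ point strictly above $L$; the effective last on-line vertex could then migrate to some $i < m$ with $p \nmid i$ and $v(b_i) = \rho i - r$, causing the critical-point count to drop below $m-1$. The remaining verifications, including the harmless case where some $b_i$ vanishes and simply contributes no point to the diagram (any roots at $w=0$ being absorbed into the slope-$\leq \rho$ tally), are routine Newton polygon bookkeeping.
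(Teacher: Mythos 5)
Your argument is correct and follows essentially the same route as the paper's proof: identify $m$ as the largest index at which the maximum $\rho i - v(b_i)$ is attained (via the Newton polygon of $F(w)-y$), then observe that since $p\nmid m$ the factor $i$ introduced by differentiation keeps the point at index $m$ on the supporting line of slope $\rho$ while leaving all higher-index points strictly above it, so the slope-$\le\rho$ part of the Newton polygon of $F'$ has length exactly $m-1$. The only cosmetic difference is that the paper first conjugates so the disk is $\bar{D}(0,1)$ (making the supporting line horizontal), whereas you carry the general radius $\rho$ through the computation.
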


\begin{proof}
Without loss of generality, replace $f$ with a conjugate so
$\bar{D}(a,s)=\bar{D}(0,1)$. Let
\[
  f =\sum_{i=0}^db_iz^i.
\]
Then, counting with multiplicity,
$f(z)-f(0)$ has $m$ roots in the unit disk, which implies that $m$ is
the largest positive integer such that
\[
  v(b_m) = \min_{1 \leq i\le d} v(b_i).
\]
Now consider the Newton polygon for $f'$. Since $m$ is the largest
integer such that $v(b_m)$ is minimal and $p \nmid m$, we see that
$m$ is also the largest integer such that $v(mb_m)$ is minimal among
all $v(ib_i)$. Therefore, the Newton polygon for $f'$ has exactly
$m-1$ non-positive slopes, which implies that there are $m-1$ critical
points, counted with multiplicity, in $\bar{D}(0,1)$.
\end{proof}

%%%%%%%%%%%%%%%%%%%%%%%%%%%%%%%%%%%%%%%%%%%%%%%%%%%%%%%%%%%%%%%%%%%%%%%%%

\section{Motivation: $p>d$}
\begin{thm}\label{pgtd} Let $p>d$. Then $f \in \mathcal{P}_{d,p}$ is PCB if and only if $|c_i| \leq 1$ for all critical points $c_i$ of $f$.
\end{thm}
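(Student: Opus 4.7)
The plan is to treat the two directions separately: the forward direction is routine, while the reverse will be handled by rescaling, reducing to the residue field, and counting multiplicities.

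For the forward direction ($|c_i|\le 1$ for all $i$ implies PCB), I would use the formula $a_i = (-1)^{d-i}(d/i)\sigma_{d-i}$. Since $p > d$, the factor $|d/i|$ equals $1$ for each $1\le i \le d-1$, so $|a_i| = |\sigma_{d-i}| \le 1$. Thus every coefficient of $f$ has absolute value at most $1$, so $f$ maps the closed unit disk into itself; each critical point lies in this disk, so every critical orbit is bounded.

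For the reverse direction I would argue by contradiction, assuming $f$ is PCB and $r := -v(c_1) > 0$. My first observation is that $R = r$ under these hypotheses: with $k_0 := \#\{j : |c_j| = |c_1|\}$, the term $c_1 c_2 \cdots c_{k_0}$ dominates the ultrametric sum defining $\sigma_{k_0}$, so $|\sigma_{k_0}| = |c_1|^{k_0}$ and the index $i = d-k_0$ achieves the maximum in~\eqref{eqn:Reqmax}. I would then conjugate by the scaling $z \mapsto c_1 z$, setting $\tilde f(z) := c_1^{-d} f(c_1 z)$, which remains monic of degree $d$ with $\tilde f(0) = 0$ and has coefficients $\tilde a_i = a_i/c_1^{d-i}$ of absolute value at most $1$. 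Reducing modulo the maximal ideal yields $\bar{\tilde f} \in \kbar[z]$, monic of degree $d$ with $\bar{\tilde f}(0) = 0$, and writing $\gamma_j := c_j/c_1$ the derivative factors as
\[
  \bar{\tilde f}'(z) = d\,z^{d-1-k_0}\prod_{j=1}^{k_0}(z - \bar\gamma_j),
\]
with each $\bar\gamma_j \in \kbar \setminus \{0\}$ for $j \le k_0$.

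The heart of the proof is a multiplicity count on $\bar{\tilde f}$. PCB gives $|f(c_j)| \le p^R = p^r$ for every $j$, so $|\tilde f(\gamma_j)| \le p^{-(d-1)r} < 1$ and hence $\bar{\tilde f}(\bar\gamma_j) = 0$ in $\kbar$. Because $p > d$, any critical point of $\bar{\tilde f}$ of multiplicity $m\le d-1$ that is also a root must be a root of exact multiplicity $m+1$ (the integration denominators $m+1\le d$ are units in $\kbar$). Summing root multiplicities: the origin contributes $d - k_0$ and, grouping the $\bar\gamma_j$ into their $\ell \ge 1$ distinct nonzero values (with critical multiplicities totalling $k_0$), the remaining roots contribute $k_0 + \ell$. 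The total $d + \ell$ exceeds $\deg\bar{\tilde f} = d$, a contradiction; so some $\bar\gamma_j$ is \emph{not} a root of $\bar{\tilde f}$, giving $|\tilde f(\gamma_j)| = 1$, $|f(c_j)| = p^{dr} > p^r = R$, and $c_j$ escapes, contradicting PCB. The main obstacle will be the multiplicity bookkeeping in the reduction, especially when several critical points of $\tilde f$ collapse to a single residue in $\kbar$; the hypothesis $p > d$ is essential both for $|a_i| = |\sigma_{d-i}|$ and for preserving the critical-to-root multiplicity jump of $+1$ in characteristic $p$.
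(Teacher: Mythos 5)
Your proof is correct, and while the forward direction coincides with the paper's, your reverse direction takes a genuinely different route. The paper compares the Newton polygons of $f$ and $f'$ (identical up to translation when $p>d$) to show that $f$ has exactly $m$ roots of absolute value $p^r$, where $m$ is the number of critical points of that size; it then decomposes $f^{-1}(\bar{D}(0,p^r))$ into disks $\bar{D}(z_i,p^{s_i})$ via Proposition~\ref{diskbehavior} and applies Lemma~\ref{pto1} (a disk mapping $d_i$-to-$1$ onto its image with $p\nmid d_i$ contains exactly $d_i-1$ critical points) to conclude that these disks can accommodate only $m-n<m$ large critical points, a contradiction. You instead rescale by $c_1$, reduce modulo the maximal ideal, and run the count entirely in $\kbar[z]$: post-critical boundedness forces every residue $\bar\gamma_j$ to be a root of $\bar{\tilde f}$, and since $p>d$ guarantees that a critical point of residual multiplicity $m$ which is a root is a root of multiplicity exactly $m+1$, the total root multiplicity would be $d+\ell>d$. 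Both arguments implement the same ``too many critical points, not enough room'' count, but yours replaces the non-Archimedean analytic machinery with elementary polynomial algebra over the residue field, making the proof of this particular theorem more self-contained; the paper's disk-decomposition framework, by contrast, is the one that is reused and refined in Section 5 to handle the harder range $\frac12 d\le p<d$, where a clean reduction argument of your type is no longer available because $|a_p|=p^{-1}|\sigma_{d-p}|$ breaks the unit-coefficient bookkeeping. Your preliminary step $R=r$ (via $|\sigma_{k_0}|=|c_1|^{k_0}$) is sound and plays the role of Lemma~\ref{Risr}, which the paper's proof uses implicitly when it asserts that every critical point lies in $f^{-1}(\bar{D}(0,p^r))$.
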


This is a known result, but since it does not appear in the literature, we present an elementary proof here.

\begin{proof}
First, suppose $|c_i| \leq 1$ for all $i$. Then $|a_i| = |\frac{d}{i}\sigma_{d-i}| \leq 1$ for all $i$, and so $f(\bar{D}(0,1)) \subseteq \bar{D}(0,1)$. Therefore, $f$ is PCB.

Now let $f$ be PCB and suppose, for contradiction, that $-v(c_1) = r > 0$. Let $m$ be maximal such that $-v(c_m) = r$. (In other words, there are exactly $m$ critical points with absolute value $p^r$.) First, we show that there are exactly $m$ roots $z_1, z_2, \dots,  z_m$ of $f$ such that $-v(z_i) = r$.

Since $p>d$, the Newton polygons for $f$ and $f'$ are the same, up to horizontal translation. Thus, the rightmost segment of the Newton polygon for $f$ has the same slope and horizontal length as the rightmost segment of the Newton polygon for $f'$, and therefore, $f$ has exactly $m$ roots $z_i$ such that $-v(z_i) = r$.

Next, we use Lemma \ref{pto1} to reach a contradiction. Consider $f^{-1}(\bar{D}(0,p^r))$. This is a union of up to $d$ smaller disks $\bar{D}(z_i, p^{s_i})$, where the $z_i$ are the roots of $f$. Note that, since $f$ is PCB, each critical point must lie in one of these disks. By Proposition \ref{diskbehavior}, we know that each $s_i \leq r/d$. 

So, each of the $m$ large critical points $c_1, \dots,  c_m$ must lie in the following set:

\[
V = \displaystyle\bigcup_{i=1}^m\bar{D}(z_i, p^{s_i}).
\] 

$V$ is a disjoint union of $n \leq m$ disks. Relabel the subscripts so that we can express $V$ as follows:
\[
V=\displaystyle\coprod_{i=1}^n \bar{D}(z_i, p^{s_i}).
\]
Let $\bar{D}(z_i, p^{s_i})$ map $d_i$-to-1 onto $\bar{D}(0,r)$. Then, since $V$ contains exactly $m$ preimages of 0, counted with multiplicity, we have
\[
\displaystyle\sum_{i=1}^n d_i = m.
\]
Let $b_i$ be the number of critical points in $\bar{D}(z_i, p^{s_i})$. Then, Lemma~\ref{pto1} tells us that $b_i = d_i-1$, and so the number of critical points, counted with multiplicity, in $V$ is 
\[
\displaystyle\sum_{i =1}^nb_i = m-n < m.
\]
This is a contradiction. Thus, if $f$ is PCB, all the critical points lie in the unit disk.
\end{proof}

In particular, Theorem~\ref{pgtd} implies that $r(d,p)=0$ for $p > d$. The same is also true if $d=p^k$ for some
positive integer $k$. This result follows immediately upon comparing the Newton polygons for $f$ and $f'$. A proof of this result can be found in \cite{MR2881322}, but we present a proof below that is simple and tailored to the normal form used in this paper.

\begin{prop}\label{primepower} Let $d=p^k$ for some positive integer $k$, and let $f \in \mathcal{P}_{d,p}$. Then $f$ is PCB if and only if $|c_i| \leq 1$ for all critical points $c_i$ of $f$.
\end{prop}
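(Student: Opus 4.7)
The plan is to exploit the identity $a_i = (-1)^{d-i}(d/i)\sigma_{d-i}$, together with the observation that when $d = p^k$ every index $1 \leq i \leq d$ satisfies $v(d/i) = k - v(i) \geq 0$. This is exactly the comparison of Newton polygons alluded to by the author, since $v(a_i)$ and $v(ia_i)$ are the heights of the vertices of the respective polygons of $f$ and $f'$, and the only way the two polygons differ by more than a unit shift is at indices where $p \mid i$.

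For the easy direction, I would suppose $|c_i| \leq 1$ for every critical point. Then $|\sigma_j| \leq 1$ for every $j$, and since $|d/i| \leq 1$ for $1 \leq i \leq d$, we get $|a_i| \leq 1$. Hence $f$ maps the unit disk into itself, so every orbit in $\bar{D}(0,1)$ --- in particular every critical orbit --- is bounded, and $f$ is PCB.

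For the reverse direction I would argue by contradiction: assume $f$ is PCB but $r = -v(c_1) > 0$. Since all critical points have bounded orbit, they lie in the filled Julia set $\Kcal_f \subseteq \bar{D}(0, p^R)$, where $R = \max_i\{-v(a_i)/(d-i)\}$; in particular $r \leq R$. Fix an index $i^*$ realizing the maximum, so $v(a_{i^*}) = -(d-i^*)R$. The identity above gives $v(a_{i^*}) = k - v(i^*) + v(\sigma_{d-i^*})$, while the trivial bound $v(\sigma_{d-i^*}) \geq -(d-i^*)r$ holds because each monomial of $\sigma_{d-i^*}$ is a product of $d-i^*$ critical points of valuation $\geq -r$. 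Combining these and rearranging yields $v(i^*) - k \geq (d-i^*)(R-r) \geq 0$, i.e., $v(i^*) \geq k$. But $1 \leq i^* \leq d-1 = p^k - 1$ forces $v(i^*) \leq k-1$, contradicting what we just derived. Hence $r = 0$.

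The main subtlety I expect is to resist the temptation to assume $R = r$: here we are outside the regime $p > \tfrac{1}{2}d$ of Lemma~\ref{Risr}, so a priori $R$ may strictly exceed $r$. The argument above sidesteps this issue by using only the weak inequality $r \leq R$; the slack $(d-i^*)(R-r) \geq 0$ is nonnegative and only strengthens the inequality used to reach the contradiction, so no appeal to $R=r$ is needed.
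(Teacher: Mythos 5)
Your proposal is correct. Both directions check out: in the easy direction the integrality of $d/i$ for $1\le i\le d$ gives $|a_i|\le 1$, and in the hard direction the chain $v(a_{i^*}) = k - v(i^*) + v(\sigma_{d-i^*})$, $v(\sigma_{d-i^*})\ge -(d-i^*)r$, $v(a_{i^*}) = -(d-i^*)R$, and $R\ge r$ does force $v(i^*)\ge k$, which is impossible for $1\le i^*\le p^k-1$. Your argument rests on the same arithmetic fact as the paper's --- namely $v(d/i)=k-v(i)\ge 1$ for every $i<d$ --- but the bookkeeping is genuinely different. The paper compares the rightmost segments of the Newton polygons of $f$ and $f'$ to conclude that the largest critical point strictly exceeds the largest root, and then escapes via $|f(c)|=|c|^d>p^R$; you never mention the roots of $f$ at all, instead deriving a purely numerical contradiction from the index $i^*$ at which the bound $R=\max_i\{-v(a_i)/(d-i)\}$ is attained. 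The paper's route is shorter and more geometric; yours is more self-contained (no slope comparison to justify, no product formula for $|f(c)|$) and, as you note, it only needs the weak inequality $r\le R$, so it transparently avoids any illicit appeal to Lemma~\ref{Risr}, whose hypothesis $p>\frac12 d$ need not hold here. One small point worth making explicit: since you assume $r>0$ and $r\le R$, the maximum defining $R$ is positive, so it is indeed attained at an index $i^*$ with $a_{i^*}\ne 0$ and the identity $v(a_{i^*})=k-v(i^*)+v(\sigma_{d-i^*})$ makes sense.
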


\begin{proof}
First, suppose all the critical points for $f$ lie in the unit disk. Then all the coefficients of $f$ are $p$-integral, and so $f(\bar{D}(0,1)) \subseteq \bar{D}(0,1)$. Therefore, $f$ is PCB.

Now, suppose $f$ is PCB. By comparing the Newton polygons for $f$ and $f'$, one sees that the slope of the rightmost segment of the Newton polygon for $f'$ is greater than the slope of the rightmost segment of the Newton polygon for $f$. In other words, the largest critical point of $f$ is strictly larger than the largest root. If this critical point $c$ were outside the unit disk, then $|f(c)| = |c|^d > R$, and $f$ would not be PCB. Therefore, $f$ is PCB if and only if all critical points lie in the unit disk.
\end{proof}

%%%%%%%%%%%%%%%%%%%%%%%%%%%%%%%%%%%%%%%%%%%%%%%%%%%%%%%%%%
\section{The Mandelbrot Radius for Primes $\frac12d\le p<d$}

So far, we have seen a few situations in which the $p$-adic Mandelbrot set can be very easily described: it is simply a product of unit disks. For primes smaller than $d$, this is often not the case. In Section 6, we investigate a one-parameter family of cubic polynomials over $\QQ_2$ to illustrate that $\mand$ can be quite intricate. While we cannot hope to describe $\mand$ exactly for $p<d$, we can get a sense of the size of $\mand$ by calculating $r(d,p)$. We now give a lower bound for $r(d,p)$ when $p<d$.

\begin{prop}\label{lowerbound} Suppose that $p<d$ and that $d$ is not a power of $p$. Let $k$ be the largest integer such that $p^k <d$ and let $\ell$ be the largest integer such that $p^\ell | d$. Write $d = ap^k+b$, where $1 \leq a < p$ and $1 \leq b <p^k$. Then,
\[
r(d,p) \geq \displaystyle\frac{a(k-\ell)p^k}{d-1}.
\]
\end{prop}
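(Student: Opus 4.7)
The plan is to exhibit, for each admissible $(d,p)$, an explicit polynomial $f \in \mand$ whose largest critical point has absolute value exactly $p^{a(k-\ell)p^k/(d-1)}$. Motivated by the shape of the bound, I would try the two-term factorization
\[
  f(z) \;=\; z^b(z-c)^m,\qquad m = ap^k,\ \ b = d-ap^k,
\]
for a carefully chosen $c\in\CC_p$. Note $b\ge 1$ and $m\le d-1$, so $f\in\Pcal_{d,p}$. Differentiating gives
\[
  f'(z) \;=\; z^{b-1}(z-c)^{m-1}\bigl[dz-bc\bigr],
\]
so the critical points are $0$ (with multiplicity $b-1$), $c$ (with multiplicity $m-1$), and the simple critical point $c^* := bc/d$. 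Because $d=ap^k+b$ with $v(b)<k$, one has $v(b)=v(d)=\ell$; hence $|c^*|=|c|$, and the largest critical point has absolute value $|c|$.

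The key step is to choose $c$ so that $f(c^*)=c$, which will make $f$ post-critically finite (the three critical orbits becoming $\{0\}$, $\{c,0\}$, and $\{c^*,c,0\}$) and therefore PCB. A direct calculation gives
\[
  f(c^*) \;=\; \left(\frac{bc}{d}\right)^{\!b}\!\left(\frac{(b-d)c}{d}\right)^{\!m} \;=\; \frac{b^b(b-d)^m}{d^d}\,c^d,
\]
so the requirement $f(c^*)=c$ reduces to the single equation $c^{d-1}=d^d/\bigl(b^b(b-d)^m\bigr)$, which has $d-1$ solutions in the algebraically closed field $\CC_p$. Taking valuations and using $v(d)=v(b)=\ell$ together with $v(b-d)=v(-ap^k)=k$, one computes
\[
  v(c^{d-1}) \;=\; d\ell-b\ell-mk \;=\; -m(k-\ell) \;=\; -a(k-\ell)p^k,
\]
so $-v(c) = a(k-\ell)p^k/(d-1)$, which is exactly the claimed lower bound.

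There is no serious obstacle once the ansatz $f(z)=z^b(z-c)^m$ with $m=ap^k$ has been guessed: PCB is automatic from $f(c^*)=c$, and the valuation calculation is a direct chase. The one point deserving care is the identity $v(b)=\ell$ (coming from $b=d-ap^k$ with $v(ap^k)=k>\ell$), which guarantees that $c^*=bc/d$ has the \emph{same} absolute value as $c$ rather than a strictly larger one; without this coincidence the analysis of the largest critical point, and hence of the bound, would be more delicate.
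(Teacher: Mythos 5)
Your proposal is correct and is essentially the paper's own proof: the author uses the same ansatz $f(z)=z^{b}(z-\alpha)^{ap^k}$ with $\alpha^{d-1}=d^d/\bigl((-ap^k)^{ap^k}b^{b}\bigr)$, arranges the same post-critically finite orbit $c^*\mapsto c\mapsto 0\mapsto 0$, and extracts the bound from the same valuation computation. Your added observation that $v(b)=v(d)=\ell$ (so that $|c^*|=|c|$) is a worthwhile detail that the paper leaves implicit.
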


\begin{proof}
Let $\alpha \in \CC_p$ satisfy the following equation:
\[ \alpha^{d-1} = \displaystyle\frac{d^d}{(-ap^k)^{ap^k}b^{b}}.
\]
Then, the lower bound given in this proposition is realized by the following map:
\[f(z)=z^{b}(z-\alpha)^{ap^k}.
\]
This map has either two or three critical points: $\alpha$, $\frac{b}{d}\alpha$, and possibly 0. (Zero is a critical point if $b \neq 1$.) We choose $\alpha$ above so that $f(\frac{b}{d}\alpha) = \alpha, f(\alpha) = 0,$ and $f(0)=0$. Thus, $f$ is post-critically finite, and therefore PCB, with 
\[
-v(\alpha) = \frac{a(k-\ell)p^k}{d-1}.
\]
\end{proof}

Since $\ell$ is necessarily less than or equal to $k$, this gives a positive lower bound for $r(d,p)$ in most situations in which $p<d$. It does not give a positive lower bound when $d=p^kq$, where $q<p$, because in this situation $\ell=k$ and thus $-v(\alpha)=0$. The next proposition is the beginning of an exploration of that situation.

\begin{prop}\label{twop} Let $f \in \mathcal{P}_{d,p}$ and suppose $d=2p$. Then $f$ is PCB if and only if $|c_i| \leq 1$ for all critical points $c_i$ of $f$.
\end{prop}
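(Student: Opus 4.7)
The backward implication is immediate: if $|c_i|\le 1$ for every critical point, then $|\sigma_j|\le 1$ by Vieta, and since $d=2p$ one has $|d/i|\le 1$ for every $1 \le i \le d$, so $|a_i|\le 1$ and $f$ preserves the unit disk, making $f$ PCB. For the forward direction, I would assume $f$ is PCB with $r := -v(c_1) > 0$, let $m$ denote the number of critical points of absolute value $p^r$, and aim for a contradiction.

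The first step is to compute $R$ sharply enough. The identity $|a_i| = |d/i|\,|\sigma_{d-i}|$ together with $|\sigma_{d-i}| \le p^{(d-i)r}$ yields $(-v(a_i))/(d-i) \le r + (v(i)-v(d))/(d-i)$ for each $1 \le i \le d-1$. When $p = 2$, $v(d)=2$ strictly exceeds $v(i) \le 1$ for every such $i$, so every contribution is strictly less than $r$, forcing $R < r$ and contradicting $r \le R$. For $p$ odd, $v(d)=1$, and only the $i=p$ contribution can reach $r$; attaining it requires $|\sigma_p|=p^{pr}$, which in turn forces $m \ge p$ because otherwise every $p$-fold product of critical points includes a factor of absolute value strictly smaller than $p^r$.

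Now focus on $p$ odd. The PCB condition $|f(c_i)|\le p^r$ for each of the $m$ large critical points, combined with $|a_j c_i^j| \le p^{dr-1}$ for $j \ne p, d$, yields either $|c_i^p + a_p| \le p^{-(p-1)r}$ or $|c_i^p + a_p| \le p^{pr-1}$ depending on which regime $r$ sits in. Rescaling by $\tilde c_i = p^r c_i$ and $\tilde a_p = p^{rp} a_p$ (so that $|\tilde c_i|=|\tilde a_p|=1$), both bounds give $|\tilde c_i^p + \tilde a_p| < 1$ for every $r > 0$. Reducing modulo $p$ in $\overline{\FF}_p$ we obtain $\overline{\tilde c}_i^{\,p} = -\overline{\tilde a}_p$. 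Since Frobenius is a bijection on $\overline{\FF}_p$, this equation has a unique solution $\gamma$, and therefore $\overline{\tilde c}_i = \gamma$ for every $i \in \{1, \ldots, m\}$.

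To conclude, reduce $\tilde\sigma_p := p^{rp}\sigma_p$ modulo $p$: only $p$-subsets of $\{1,\ldots,m\}$ survive in the residue field, giving $\overline{\tilde\sigma}_p = \binom{m}{p}\gamma^p = -\binom{m}{p}\overline{\tilde a}_p$. Combined with the Vieta identity $a_p = -2\sigma_p$ (valid for $p$ odd since $d/p = 2$ and $(-1)^p = -1$), which rescales to $\overline{\tilde a}_p = -2\overline{\tilde\sigma}_p$, this yields $\overline{\tilde a}_p(1 - 2\binom{m}{p}) = 0$, hence $2\binom{m}{p}\equiv 1 \pmod p$. But Lucas's theorem gives $\binom{m}{p}\equiv 1 \pmod p$ for every $m \in [p, 2p-1]$, so $2 \equiv 1 \pmod p$, a contradiction. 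The main obstacle is ensuring that the rescaled bound $|\tilde c_i^p + \tilde a_p| < 1$ holds uniformly in $r > 0$ — the case split on the regime of $r$ is brief but essential for the argument.
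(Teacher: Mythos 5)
Your proof is correct, but it takes a genuinely different route from the paper's. The paper compares the Newton polygons of $f$ and $f'$ to force exactly $p$ roots and at least $p$ critical points of absolute value $p^r$, then invokes its disk-mapping lemma (a disk mapping $m$-to-$1$ with $p\nmid m$ contains $m-1$ critical points) to trap all the large critical points in a single small disk $\bar{D}(z_1,p^s)$ with $s\le r/(2p)$, and finally expands $f(c_1)$ to show the coefficient of $c_1^{2p}$ is $1-2\binom{k}{p}\equiv -1\pmod p$, a unit, so the orbit escapes. You instead work entirely with valuations of symmetric functions: the estimate $(-v(a_i))/(d-i)\le r+(v(i)-v(d))/(d-i)$ kills $p=2$ outright (the paper defers that case to its Proposition 4.2 on $d=p^k$) and for odd $p$ pins down $R=r$, $|\sigma_p|=p^{pr}$, and $m\ge p$; then the cancellation $|c_i^p+a_p|<p^{pr}$ plus injectivity of Frobenius on the residue field shows all large critical points share one reduction $\gamma$, and computing $\bar{\tilde\sigma}_p=\binom{m}{p}\gamma^p$ against $a_p=-2\sigma_p$ gives $2\binom{m}{p}\equiv 1\pmod p$, refuted by Lucas. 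The two arguments meet at essentially the same binomial-coefficient obstruction, but yours reaches it without Newton polygons or the disk-mapping lemma, getting the ``all large critical points are close together'' step for free from Frobenius rather than from disk preimages; the trade-off is that your argument leans heavily on the specific shape $d=2p$ (only $i=p$ carries extra valuation, and $f(c_i)$ factors through $c_i^p(c_i^p+a_p)$), whereas the paper's machinery is what carries over to the harder range $\frac12 d<p<d$ of the main theorem. All the individual steps check out: the error terms $|a_jc_i^j|\le p^{2pr-1}$ for $j\ne p,d$, the bound $|\tilde c_i^p+\tilde a_p|\le\max\bigl(p^{-(2p-1)r},p^{-1}\bigr)<1$, and $\binom{m}{p}\equiv 1\pmod p$ for $p\le m\le 2p-1$ are all valid.
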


\begin{proof}
Proposition~\ref{primepower} proves this statement if $p=2$, so we proceed assuming $p \neq 2$. Again, one direction is straightforward. If all the critical points are in the unit disk, then all the coefficients of $f$ are $p$-integral,  and $f$ is PCB.

Now let $f$ be PCB and suppose for contradiction that $f$ has a critical point outside the unit disk, with $-v(c_1) = r>0$. By
comparing the rightmost segments of the Newton polygons for $f$ and
$f'$, since the rightmost vertex for $f'$ is one unit above the rightmost vertex for $f$, we get that in most cases the largest critical point for $f$ is larger than its largest root. Since we can write $f(z)=\prod_{i=1}^{d}(z-z_i)$, where the $z_i$ are the roots of $f$ counted with multiplicity, in this situation $|c_1-z_i|=|c_1|$ for all $i$, and therefore $|f(c_1)|=|c_1|^d$. More generally, $|f^n(c_1)|=|c_1|^{d^n}$, and thus $f$ cannot be PCB. The only situation in which this does not happen is if the
rightmost segment of the Newton polygon for $f$ has horizontal length
equal to $p$, in which case it is possible for the largest root of $f$
to have the same absolute value as the largest critical point. In this situation,
there are exactly $p$ roots $z_i$ with $-v(z_i) = r$, and there are at
least $p$ critical points $c_i$ with $-v(c_i) = r$. Suppose there are
exactly $k$ such critical points, counted with multiplicity, where $p
\leq k \leq 2p-1$. Now we use Lemma~\ref{pto1} to show that this is only possible if they are all contained in
a disk centered at a root $z_1$ that maps $p$-to-1 onto $\bar{D}(0,
p^r)$. Recall that if $f$ is PCB, then each critical point lies in a disk $\bar{D}(z_i, p^{s_i})$ mapping via $f$ onto $\bar{D}(0, p^r)$, where $f(z_i)=0$. Proposition~\ref{diskbehavior} implies that $s_i \leq \frac{r}{2p}$ for all $i$. Since $s_i<r$, it is necessary that each of the $k$ large critical points lie in one of these disks centered at a root $z_i$ with $-v(z_i)=r$. Since there are at least $p$ such critical points and only $p$ such roots, Lemma~\ref{pto1} implies that this is only possible if there is one disk $\bar{D}(z_1, p^s)$ mapping via $f$ onto $\bar{D}(0, p^r)$ containing all $p$ such roots and all $k$ of the largest critical points.

Writing $c_i = c_1 + \epsilon_i$ for $2 \leq i \leq k$,
we calculate $f(c_1)$: 

\begin{multline*}
  f(c_1) = c_1^{2p}\left(1-\frac{2p}{2p-1}\binom{k}{1} +
  \frac{2p}{2p-2}\binom{k}{2} - \dots + 2p\binom{k}{2p-1}\right) \\
  + \epsilon, \text{where $ -v(\epsilon) < 2pr$.}
\end{multline*}

We will reach a contradiction if the coefficient of $c_1^{2p}$ is a $p$-adic unit, because this would imply that $-v(f(c_1)) = 2pr > r = R$, and thus that $f$ is not PCB. Modulo $p$, the coefficient of $c_1^{2p}$ is congruent to
\[
1-\frac{2p}{p}\binom{k}{p} \equiv 1-2 \equiv -1 \pmod{p}.
\]
This is because $\binom{k}{p} \equiv\left\lfloor\frac{k}{p}\right\rfloor\pmod{p}$ and $p \le k < 2p$.
Thus, we reach the desired conclusion, that $f$ is PCB if and only if all the critical
points lie in the unit disk.
\end{proof}

In particular, Proposition~\ref{twop} implies that $r(2p, p) = 0$. A similar but more elaborate argument shows that $r(3p,p) = 0$ as well, but the techniques used do not generalize to arbitrary $r(kp,p)$.

Now we turn our attention to the case where $\frac12d <p<d$ to prove the remainder of Theorem~\ref{mainthm}.

\begin{proof}

Suppose that $\frac12d <p<d$. Note that Proposition \ref{lowerbound} shows that $r(d,p) \geq \frac{p}{d-1}$. It remains to show that $\frac{p}{d-1}$ is also an upper bound for~$r(d,p)$. Suppose there is a polynomial $f \in \mand$ with a critical point $c_1$ such that $-v(c_1) = r > 0$. Let $d = p+k$, where $1 \leq k \leq p-1$. Lemma \ref{Risr} implies that the critical orbits for $f$ are all contained in $\bar{D}(0,p^r)$. Let $m$ denote the number of critical points with absolute value $p^r$ (with multiplicity), i.e., 
\[
m = \max\{i: -v(c_i)=r\}.
\]

We break the proof into two cases. The first case we consider is $m<p$.

We will refer to $\{c_1, c_2, \dots c_m\}$ as the \emph{large critical points}. Each large critical point must lie in one of the disks in the following set, where $f(z_i) = 0$ and $s_i \leq r/d$:
\[
f^{-1}(\bar{D}(0,p^r)) = \bigcup_{i=1}^d \bar{D}(z_i, p^{s_i}).
\]
By Lemma \ref{pto1}, we must have more than $m$ roots $z_i$ such that $-v(z_i) = r$. Since the Newton polygons for $f$ and $f'$ can only differ at one place (namely, at the $p$th place), this is only possible if there are exactly $k$ roots of $f$ (and at most $k-1$ critical points) with absolute value $p^r$. This implies that $-v(a_p) = kr$. Let $c_{m+1}$ be the largest critical point such that $-v(c_{m+1}) <r$ and let $t=-v(c_{m+1})$. Since $a_p = \frac{d}{p} \sigma_k$, we must have $-v(\sigma_k) =kr-1$, which implies that $t \geq r-1$. Looking at $f(c_{m+1})$, the sole largest term is $a_pc_{m+1}^p$, which implies that 
\[
  -v(f(c_{m+1})) = kr+pt \geq dr-p.
\]
If $f$ is 
 PCB, then $-v(f(c_{m+1})) \leq r$, which gives the inequality $dr-p \leq r$, and the desired bound follows.

Now suppose the number of large critical points is $m \geq p$. Then, by analysis of the Newton polygons for $f$ and $f'$, either $f$ has a root $z_1$ with $-v(z_1)>r$, or $f$ has exactly $m$ roots of absolute value $p^r$. The first possibility does not occur, because if $-v(z_1) > r$, then $z_1$ must be in the basin of infinity, by Lemma \ref{Risr}. This is a contradiction, since $z_1$ is preperiodic, as 0 is a fixed point for $f$. So, the largest root $z_1$ of $f$ satisfies $-v(z_1) = r$ and the number of large critical points is equal to the number of roots of absolute value $p^r$. By Lemma \ref{pto1}, the only way for $f$ to be PCB is if there is a disk $\bar{D}(c_1, p^s)$ mapping $p$-to-1 onto $\bar{D}(0, p^r)$ containing at least $p$ of the large critical points, where $s \leq r/d$ by Proposition \ref{diskbehavior}. We will again divide into two cases.

First, suppose $-v(c_i-c_j) \leq \max\{0, s\}$ for all critical
points $c_i, c_j$. Let $c_i = c_1 + \epsilon_i$, where $-v(\epsilon_i)
\leq \text{max} \{0, s\}$. Then we have
\[
  f(c_1) = c_1^d-\frac{d}{d-1}\sigma_1c_1^{d-1} + \dots +
  (-1)^{d-1}\frac{d}{1}\sigma_{d-1}c_1.
\]
We will use the fact that
\[
\sigma_i = \binom{p+k-1}{i}c_1^i + \delta_i, \text{where $-v(\delta_i) < ir$}
\]
to simplify our expression for $f(c_1)$ to the following:

\begin{multline}\label{fc1}
  f(c_1) = c_1^d\left(1-\frac{d}{d-1}\binom{d-1}{1} +
  \frac{d}{d-2}\binom{d-1}{2} \right. \\ 
   - \dots +
  \left. (-1)^{d-1}\frac{d}{1}\binom{d-1}{d-1}\right) + \epsilon
\end{multline}

It remains to check that the coefficient of $c_1^d$ is a $p$-adic unit and to determine the largest possible absolute value for $\epsilon$.
First, we look at the coefficient of $c_1^d$ in (\ref{fc1}). This coefficient can be rewritten as follows:

\[
 1-\frac{d}{d-1}\binom{d-1}{1} + \dots + (-1)^{d-1}\frac{d}{1}\binom{d-1}{d-1} = \sum_{i=0}^{d-1} (-1)^i \binom{d}{i}.
\]

Since the full alternating sum from 0 to $d$ of binomial coefficients is always zero, we see that the coefficient of $c_1^d$ is either $1$ or $-1$, depending on whether $d$ is even or odd. Either way, it is a $p$-adic unit, and so the first term in $f(c_1)$ has absolute value $p^{dr}$. Since we must have $-v(f(c_1)) \leq r$ in order for $f$ to be PCB, it is necessary that $-v(\epsilon) = dr$ as well. The only term that can possibly be that large is the one corresponding to $a_pc_1^p$. Let $\sigma_j(\epsilon_i)$ denote the $j^{th}$ symmetric function on the $\epsilon_i$. Then, the portion of $a_pc_1^p$ contributing to $\epsilon$ is:

\begin{multline*}
(-1)^k\frac{d}{p}\left(\binom{d-2}{k-1} \sigma_1(\epsilon_i)c_1^{k-1} + \binom{d-3}{k-2}\sigma_2(\epsilon_i)c_1^{k-2} \right.
\\ + \dots  + \left. \binom{p}{1}\sigma_{k-1}(\epsilon_i)c_1 + \sigma_k(\epsilon_i)\right)c_1^p.
\end{multline*}

Note that since $\binom{d-i-1}{k-i}$ is a multiple of $p$ for all $i<k$, the last term is the only one that can possibly realize the absolute value $p^{dr}$. Looking at $x = (-1)^k\frac{d}{p}\sigma_k(\epsilon_i)c_1^p$, we see that 
\[
  -v(x) \leq pr + 1 + ks \leq 1 + r(p+k/d).
\]
Since $-v(x) = dr$, we have $dr \leq 1 + pr + kr/d$, which implies that $r \leq \frac{d}{k(d-1)}$. This is strictly smaller than $\frac{p}{d-1}$ for $k>1$, and we obtain the desired result. 

We will now treat the $k=1$ case separately. Suppose $d=p+1$ and all $p$ critical points are in a disk centered at $c_1$ of radius $p^s$. The above argument shows that, if $f$ is PCB, then we must have $-v(\sigma_1(\epsilon_i)) = r-1$. We will improve our upper bound for $s$ to prove the result in this case. We know that $\bar{D}(c_1, p^s)$ maps $p$-to-1 onto $\bar{D}(0, p^r)$. Writing $f(z)$ so that it is centered at $c_1$, we have

\[
f(z) = f'(c_1)(z-c_1) + \frac{f''(c_1)}{2!}(z-c_1)^2 + \dots  + \frac{f^{(p)}(c_1)}{p!}(z-c_1)^p + (z-c_1)^{p+1}.
\]

This allows us to see that, by Proposition \ref{diskbehavior},
\[
r = \max_{1 \leq i \leq d} \{-v(f^{(i)}(c_1)/i!) + is\}.
\]

In particular, $r \geq ps + 1 -v(f^{(p)}(c_1))$, which implies that $s$ satisfies the inequality
\begin{equation}\label{sbound}
s \leq \frac{r-1+v(f^{(p)}(c_1))}{p}.
\end{equation}
To calculate $f^{(p)}(c_1)$, we note that
\[
f^{(p)}(z) = (p+1)!z + p!a_p = (p+1)!z - \frac{(p+1)!}{p}\sigma_1.
\]
Plugging in $z=c_1$, we calculate
\[
f^{(p)}(c_1) = (p+1)!c_1 - \frac{(p+1)!}{p}(pc_1 + \sigma_1(\epsilon_i)) =  -\frac{(p+1)!}{p}\sigma_1(\epsilon_i).
\]
This implies that $-v(f^{(p)}(c_1)) = -v(\sigma_1(\epsilon_i)) = r-1$. Plugging this into (\ref{sbound}), we get that $s \leq 0$. Finally, since $r-1 = -v(\sigma_1(\epsilon_i)) \leq s \leq 0$, we see that $r \leq 1$, as desired.

Now we return to the general situation to treat the final case, in which there are at least $p$ large critical points and there exist $c_i, c_j$ such that $-v(c_i-c_j) \geq \max \{0,s\}$. Without loss of generality, let $c_i = c_1$, where $c_1$ is in the disk $\bar{D}(c_1, p^s)$ which contains at least $p$ critical points. There must be a fixed point $\alpha$ for $f$ such that $-v(c_1-\alpha) \leq 0$. We see this by examining the equation
\[ 
f(c_1)-c_1 = \prod_{i=1}^d (c_1-\alpha_i).
\]
Here the $\alpha_i$ are the fixed points for $f$. Since the left hand side of this equation must have absolute value at most $p^r$, the same must be true of the right hand side. Since 0 is a fixed point, we can let $\alpha_d=0$. Then, since $-v(c_1-0) = r$, we are left with
\[
  -v\left(\prod_{i=1}^{d-1}(c_1-\alpha_i)\right) \leq 0.
\]
This implies that there is some $\alpha_i$ satisfying $-v(c_1-\alpha_i) \leq 0$. Call this fixed point $\alpha$.

Next, conjugate $f$ by the affine linear transformation $\phi(z) = z + \alpha$. The new map $f^{\phi} = 
 \phi^{-1}\circ f\circ\phi$ is of the desired form (monic with $f(0)=0$) and is PCB because $f$ is PCB. Note that $f^\phi$ has at least $p$, but no more than $d-2$, critical points in $\bar{D}(0,p^t)$, where $t=\max\{s,0\}$. This implies that the number of large critical points for $f^\phi$ is strictly less than $p$. We have already dealt with this case, and so we know that all the critical points $\gamma_i$ for $f^\phi$ satisfy  $-v(\gamma_i) \leq \frac{p}{d-1}$. So, we can conclude that 
\[-v(\gamma_i) = -v(c_i - \alpha) \leq \frac{p}{d-1}
\]
for all critical points $c_i$ of $f$. If any $c_i$ satisfies $-v(c_i) \leq \frac{p}{d-1}$, we can conclude that $-v(\alpha) \leq \frac{p}{d-1}$ as well, and we reach the desired conclusion. However, if all critical points have the same absolute value as $\alpha$, the result does not yet follow. 

Suppose that $-v(c_i) = r$ for all $i$ and that there exists $c_i$ such that $c_i \not\in \bar{D}(c_1, p^s)$. We have just shown that $-v(c_i-\alpha) \leq \frac{p}{d-1}$, which implies that $-v(c_i-c_j) \leq \frac{p}{d-1}$ for all $i, j$. Therefore, we can proceed assuming that $s < \frac{p}{d-1}$. Since not all critical points are in $\bar{D}(c_1, p^s)$,  there is another disk in the set $V=f^{-1}(\bar{D}(0,p^r))$ containing $n \geq 1$ critical points and (by Lemma \ref{pto1}) $n+1$ roots. Thus, since all the large critical points and roots must be contained in $V$ in accordance with Lemma \ref{pto1}, the  number of roots of absolute value $p^r$ outside $\bar{D}(c_1,p^s)$ must exceed the number of critical points outside $\bar{D}(c_1, p^s)$ by at least one, and therefore, the number of critical points inside $\bar{D}(c_1,p^s)$ must be greater than $p$, as there are exactly $p$ roots in $\bar{D}(c_1,p^s)$.

 Let $c_1, \dots, c_{p+1} \in \bar{D}(c_1,p^s)$ and write all critical points as before, with $c_i = c_1+\epsilon_i$. Here, $\epsilon_i \leq s \leq \frac{r}{d}$ for $2 \leq i \leq p+1$, and $\epsilon_j \leq \frac{p}{d-1}$ for $j>p+1$. Looking at equation \eqref{fc1}, we examine the size of $\epsilon$. Once again, in order to have the necessary cancellation with the leading term, it must be true that $-v(\epsilon) = dr$, which can only be achieved if $-v(\sigma_k(\epsilon_i)) = kr-1$. But,
\[
-v(\sigma_k(\epsilon_i)) \leq (k-2)\frac{p}{d-1} + 2s \leq (k-2)\frac{p}{d-1} + \frac{2r}{d}.
\]
 This gives the following inequality:

$$ kr-1 \leq (k-2)\frac{p}{p+k-1}+\frac{2r}{p+k}$$

This reduces (after a bit of algebra) to: 

$$r \leq \frac{(p+k)(k-1-p+pk)}{(p+k-1)(pk+k^2-2)} \leq \frac{p}{p+k-1} = \frac{p}{d-1},$$ which is the desired result.
\end{proof}

%%%%%%%%%%%%%%%%%%%%%%%%%%%%%%%%%%%%%%%%%%%%%%%%%%%%%%%%%%%%%

\section{A One-Parameter Family of Cubic Polynomials over $\CC_2$}

We have alluded to the fact that $\mand$ can be an interesting set when $p<d$. The following example of a one-parameter family of cubic polynomials reveals that the boundary of this Mandelbrot set can be complicated and fractal-like.

Consider the following one-parameter family of cubic polynomials, where the parameter $t \in\CC_2$:
\begin{equation}\label{cubic}
f_t(z) = z^3 - \frac{3}{2}tz^2.
\end{equation}
Of the two critical points, one (zero)  is a fixed point and the other is $t$. Note that $t=1$ corresponds to a post-critically finite map (i.e., all critical points are preperiodic), with the free critical point 1 mapping to the fixed point $-\frac12$. 

\begin{prop} \label{bdrybehavior}
Consider the one-parameter family of cubic polynomials in defined in \eqref{cubic}. For the sequence of parameters $t_k = 1+2^{2k}$ converging to $t=1$, the corresponding polynomials $f_{t_k}$ for $k \geq 2$ are not PCB. There is another sequence, $t_m = 1 + 3\cdot2^{2m+1}$, also converging to $t=1$, for which the corresponding polynomials for $m \geq 2$ are all PCB.
\end{prop}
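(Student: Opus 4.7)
The plan is to change coordinates to $u_n = f_t^n(t) + \tfrac12$ and track the orbit relative to the repelling fixed point of $f_1$ at $z = -\tfrac12$, whose multiplier $f_1'(-\tfrac12) = \tfrac{9}{4}$ has absolute value $4$. Setting $\epsilon = t - 1$, one checks $f_t(t) = -t^3/2$, so $u_1 = -(t^3-1)/2$ has $v(u_1) = v(\epsilon) - 1$, and expanding $f_t(-\tfrac12 + u)$ yields the recursion
\[
u_{n+1} = \tfrac{9}{4} u_n - 3 u_n^2 + u_n^3 - \tfrac{3\epsilon}{8} + \tfrac{3\epsilon}{2} u_n - \tfrac{3\epsilon}{2} u_n^2.
\]
The central inductive claim will be: for $n \geq 2$, as long as $v(u_n) > -2$, one has $v(u_{n+1}) = v(u_n) - 2$, and the ``odd part'' $d_n := u_n \cdot 2^{-v(u_n)}$ is an odd $2$-adic integer. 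The proof is a direct valuation comparison: under the hypothesis the linear term $\tfrac{9}{4} u_n$ strictly dominates, and the sub-dominant contributions to $d_{n+1}$ are all even, so $d_{n+1} \equiv 9 d_n \equiv d_n \pmod 2$. Iterating from $v(u_1) = v(\epsilon) - 1$ yields $v(u_n) = v(\epsilon) - 2(n-1)$ for $n \geq 2$.

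For $t_k = 1 + 2^{2k}$ with $k \geq 2$ we have $v(\epsilon) = 2k \geq 4$, so the iteration reaches $v(u_{k+2}) = -2$ with $d_{k+2}$ odd. Hence $|u_{k+2}| = 4$ and, since $|{-\tfrac12}| = 2$, no cancellation is possible, so $|w_{k+2}| = 4 > 2$. As $\Kcal_{f_{t_k}} \subseteq \bar D(0,2)$ (because $R = 1$ here), the critical orbit is unbounded and $f_{t_k}$ is not PCB.

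For $t_m = 1 + 3 \cdot 2^{2m+1}$ with $m \geq 2$, write $\mu = 3$; then $v(\epsilon) = 2m+1 \geq 5$ and iteration gives $v(u_{m+2}) = -1$ with $d_{m+2}$ odd, so $w_{m+2} = (d_{m+2} - 1)/2$. Boundedness of the orbit now hinges on showing $d_{m+2} \equiv 1 \pmod 4$, and this mod-$4$ analysis is the main technical obstacle. The plan is to compute $d_2 \equiv -15\mu \equiv 3 \pmod 4$ (using $v(\epsilon) \geq 5$ to kill higher-order corrections), and then to observe that the step $n = 2$ in the recursion contributes a shift of $-6\mu \equiv 2 \pmod 4$ from the $-3\epsilon/8$ term, giving $d_3 \equiv 1 \pmod 4$; while for $3 \leq n \leq m+1$ one checks that every correction term in the recursion has $2$-adic valuation at least $2$ (this is where $m \geq 2$ is used), so $d_{n+1} \equiv 9 d_n \equiv d_n \pmod 4$. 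Hence $d_{m+2} \equiv 1 \pmod 4$, so $v(w_{m+2}) \geq 1$ and $|w_{m+2}| \leq 1/2$. Finally, the disk $\bar D(0, \tfrac12)$ is forward-invariant under $f_t$ since $|f_t(z)| = 2|z|^2 \leq 1/2$ there, so the orbit of $w_{m+2}$ stays in $\bar D(0, \tfrac12)$ and $f_{t_m}$ is PCB.
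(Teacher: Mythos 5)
Your proposal is correct and follows essentially the same route as the paper: both track $u_n = f_t^n(t)+\tfrac12$ near the repelling fixed point $-\tfrac12$, whose multiplier $\tfrac94$ has absolute value $4$, so that $v(u_n)$ drops by $2$ per iterate until the orbit either escapes $\bar{D}(0,2)$ (first sequence) or lands in the invariant disk $\bar{D}(0,\tfrac12)$ (second sequence), and your recursion together with $d_2\equiv -15\mu$ and $d_3\equiv 9d_2-6\mu$ reproduces exactly the paper's computed iterates and its relation for the odd parts $c_i$. One small caveat: your inductive claim as stated can fail when $v(u_n)=-1$ (the cubic term then ties the linear one), but that value never occurs as an input to the recursion in either sequence, so the argument is unaffected.
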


Proposition~\ref{bdrybehavior} shows that $t=1$ is on the boundary of the $p$-adic Mandelbrot set for this family of polynomials, in that it is arbitrarily close in the parameter space to parameters corresponding to both PCB and non-PCB maps. For $p>d$, such examples do not exist, as $\mathcal{M}_{d,p}$ is simply the unit polydisk in $\mathcal{P}_{d,p} \simeq \mathbb{C}_p^{d-1}$, which has empty boundary .

\begin{figure}
\begin{center}
\includegraphics[width=11cm]{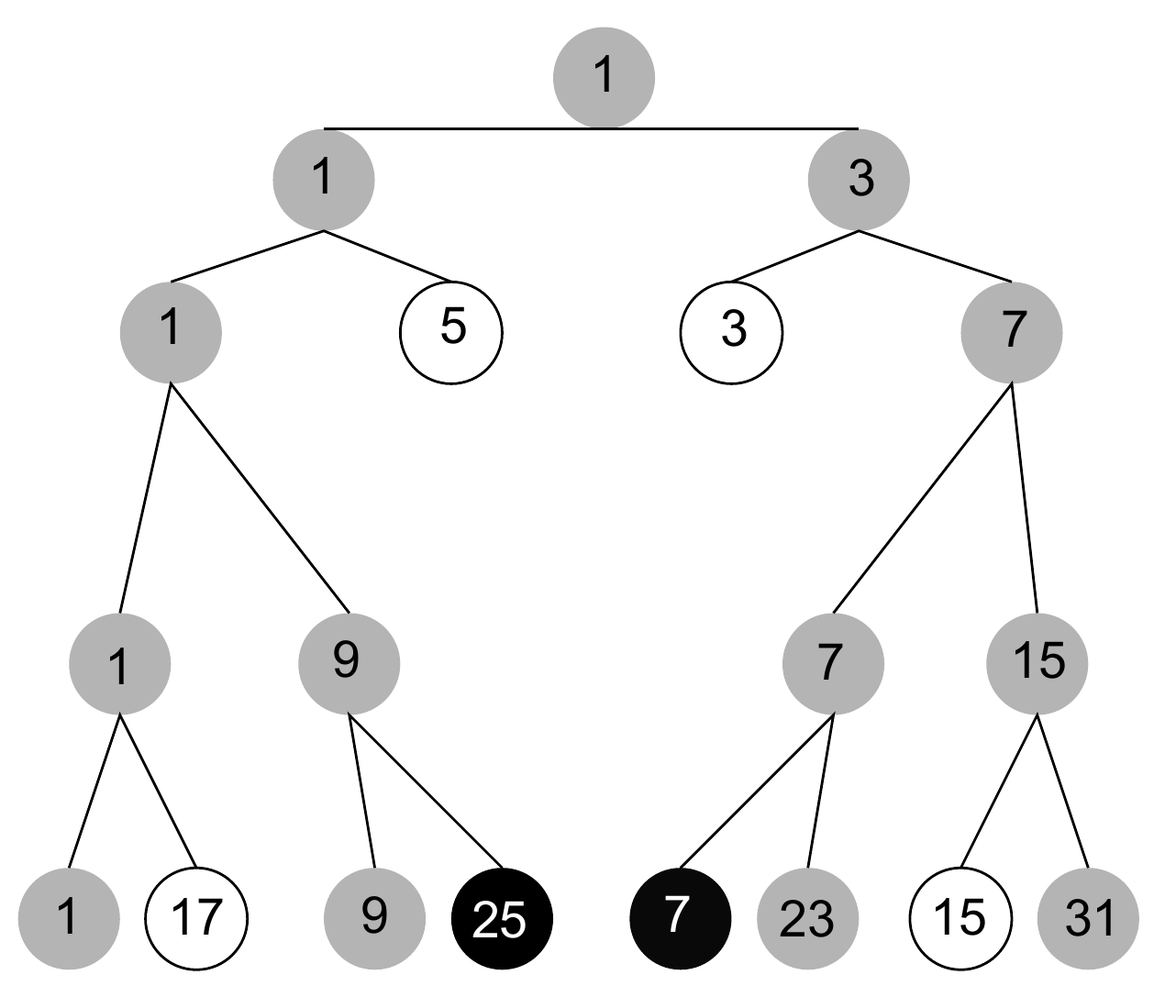}
\caption{Critical orbit behavior for $f_t$ with $|t|=1$.}
\label{fig:fig1}
\end{center}
\end{figure}

\begin{proof}
First, consider the sequence $(t_k)$, where $t_k = 1+2^{2k} \in \CC_2$. As $k$ approaches infinity, $t_k$ approaches $1$ in $\CC_2$. We will now show that for $k \geq 2$, the orbit of the critical point $t_k$ under $f_{t_k}$ is unbounded. In fact, we can say something stronger. If $t \equiv 1 + 2^{2k} \pmod{2^{2k+1}}$ for some $k \in \ZZ$ with $k \geq 2$, then $f_t$ is not PCB. Let $t \in \CC_2$ and $k \geq 2$ such that $t \equiv 1+2^{2k} \pmod{2^{2k+1}}$.

We begin by calculating the first few iterates of $t$ under $f_t$:
\[
f_t(t) = -\frac12 t^3 \equiv -\frac12 \pmod{2^{2k-1}}
\]
\[
f_t^2(t) = -\frac18 t^9 - \frac38 t^7 \equiv -\frac12 \pmod{2^{2k-2}}
\]
\begin{multline*}
f_t^3(t) =-\frac{1}{512}t^{15}(t^{12}+9t^{10}+27t^8+27t^6+12t^4+72t^2 + 108)
\\
\equiv -\frac12 \pmod{2^{2k-4}}.
\end{multline*}
From here, each iterate moves further away from $-\frac12$, so that for $2 \leq i \leq k$ we have
\[
v\left(f_t^i(t)+\frac12\right) = 2k-2i+2.
\]
Thus, $v(f_t^k(t) + \frac12) = 2$ and we can write $f_t^k(t) = -\frac12 + 4u$, where $|u|=1$. A quick calculation then shows that $|f^2(-\frac12 + 4u)| = 4$. Note that in this case, $R = 1$. So, since $f_t^{k+2}(t) \not\in \bar{D}(0,2^R)$, the orbit of $t$ is unbounded.

Now we turn our attention to the other sequence. Let $t_m = 1 + 3\cdot2^{2m+1}$. We will show that the orbit of $t_m$ under iteration of $f_{t_m}$ is bounded. For ease of notation, let $t=t_m$ for some $m \geq 2$. Once again, we begin by calculating the first few iterates:
\[
f_t(t) \equiv -\frac12 - 9\cdot2^{2m} \pmod{2^{4m+1}}
\]
\[
f_t^2(t) \equiv -\frac12 -45\cdot2^{2m-1} \pmod{2^{4m-1}}
\]
\[
f_t^3(t) \equiv -\frac12 -423\cdot2^{2m-3} \pmod{2^{4m-3}}.
\]
In general, for $3 \leq i \leq m+1$, we have
\[
f_t^i(t) \equiv -\frac12 -c_i\cdot 2^{2m-2i+3} \pmod{2^{2m-2i+7}}, \text{ where $c_i \equiv 7 \pmod{8}$.}
\]
More specifically, $c_i = 9(c_{i-1} + 2^{2i-3})$. This shows that we can write $f_t^{m+1}(t) = -\frac12 -2(7+8u)$ for some odd integer $u$. Calculating one more iterate, we see that
\[
f_t^{m+2}(t) = f_t(-\frac{29}{2} - 16u) \equiv 0 \pmod{4}.
\]
This puts $t$ in the basin of attraction of $0$, because $f_t$ maps $\bar{D}(0, \frac12)$ onto itself.

A more careful calculation shows that this proof can be extended to any $t$ such that $t \equiv 1 + 3\cdot2^{2m+1} \pmod{2^{2m+3}}$.
\end{proof}

\begin{figure}
\begin{center}
\includegraphics[width=13cm]{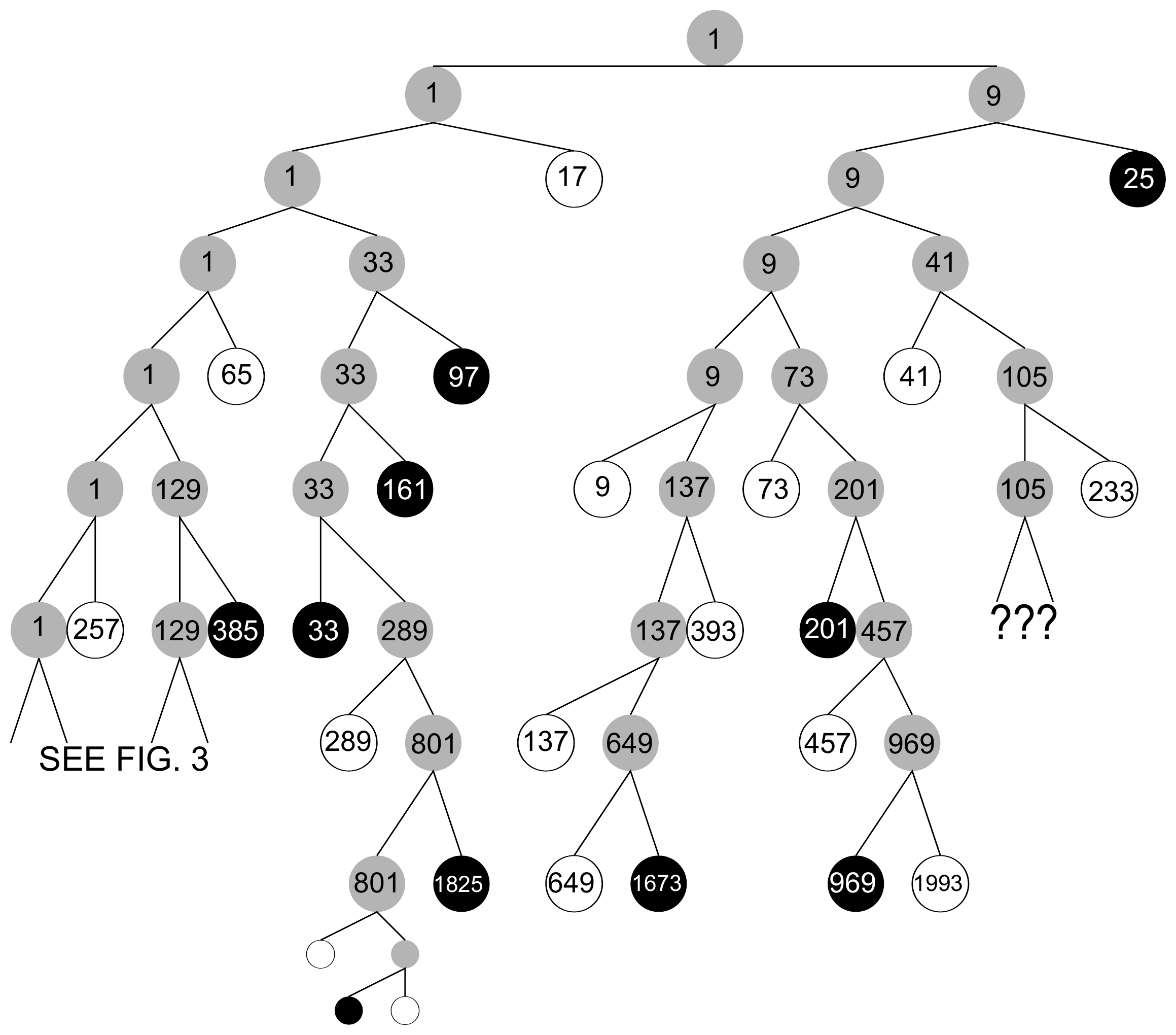}
\caption{Critical orbit behavior for $f_t$ with $t \in \bar{D}(1, \frac18)$.}
\label{fig:fig2}
\end{center}
\end{figure}

\begin{figure}
\begin{center}
\includegraphics[width=12cm]{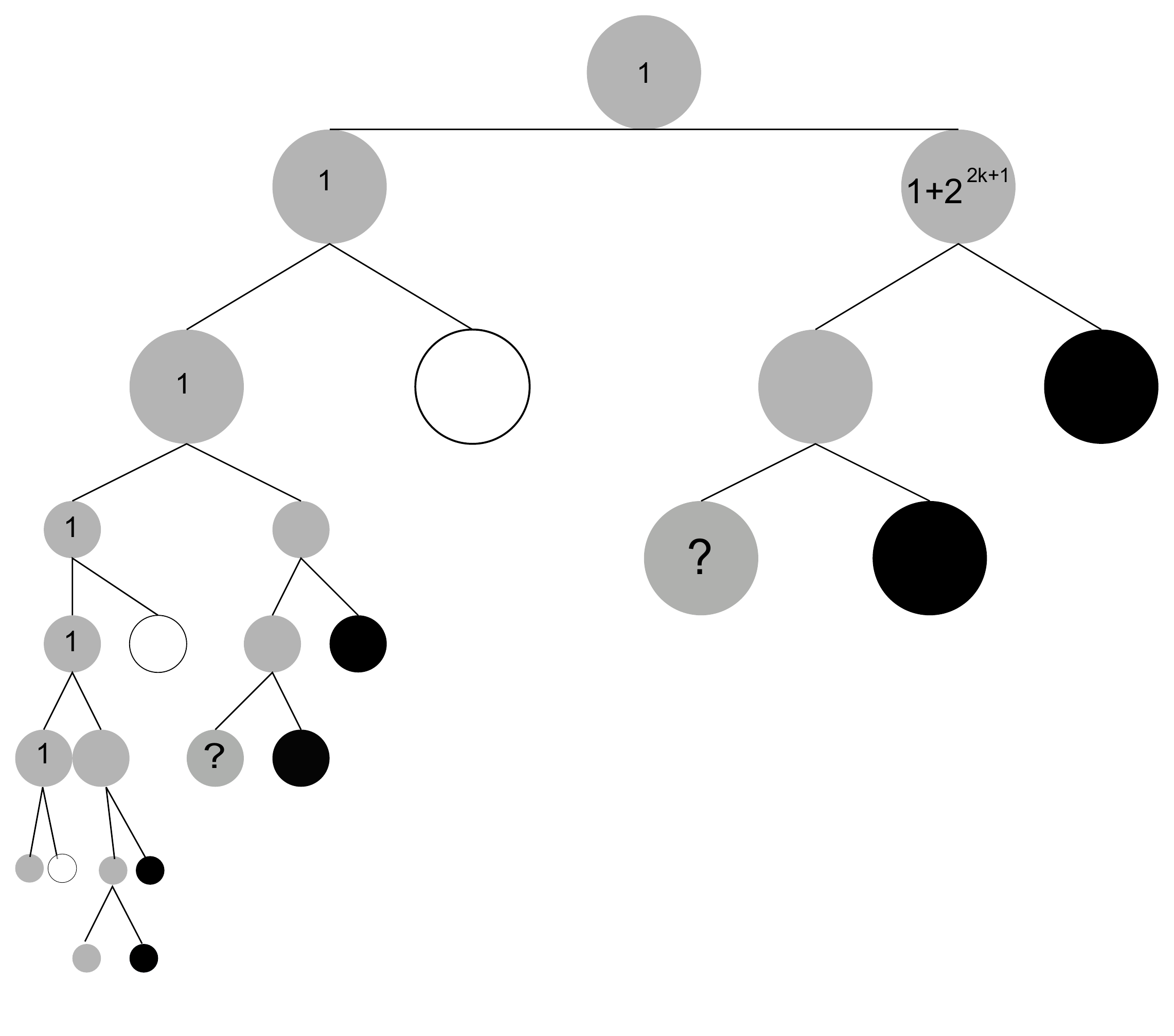}
\caption{Critical orbit behavior for $f_t$ as $t \to 1$. The top node corresponds to the disk $\bar{D}(1, 2^{-(2k+1)})$ for some $k \geq 2$.}
\label{fig:fig3}
\end{center}
\end{figure}

This example shows that this $2$-adic Mandelbrot set, i.e., the set $\{ t \in \CC_2:  f_t \text{ is PCB}\}$, has a complicated boundary. While this set is difficult to visualize over $\CC_2$, we can begin to draw this set if we restrict to $\QQ_2$. Let $\mathcal{M}_t = \{t \in \QQ_2:  f_t \text{ is PCB}\}$. For $|t|>1$, note that $R = -v(\frac32 t) = -v(t)+1$. We now calculate $|f_t(t)|$:
\[
|f_t(t)| = |-\frac12 t^3| =2|t|^3>2|t| = 2^R.
\]
Therefore, the orbit of $t$ is unbounded for all $t$ outside the unit disk. Next note that $f_t$ maps $\bar{D}(0,1)$ into itself for $|t| \leq \frac12$. So $t \in \mathcal{M}_t$ for $|t| \leq \frac12$. If we are interested in the boundary of $\mathcal{M}_t$, we therefore only have to consider $t$ for which $|t|=1$.

We can represent a neighborhood in $\QQ_2$ as a binary tree, as every disk in $\QQ_2$ is comprised of two disjoint disks. For example, the disk $\bar{D}(1,\frac12) = \{t \in \QQ_2: |t|=1\}$, which will be the root of our tree, is comprised of $\bar{D}(1, \frac14)$ and $\bar{D}(3, \frac14)$. Each disk in turn branches into two smaller disks. Traversing down the tree, one ``zooms in'' on a point in $\QQ_2$. See Figure \ref{fig:fig1} for a depiction of the first few levels of this tree. We color a node black if the entire disk is in $\mathcal{M}_t$, we color a node white if the entire disk is outside $\mathcal{M}_t$, and we color a node gray if it contains some points in $\mathcal{M}_t$ and some points outside it. The number that labels each node denotes the center of the disk the node represents. As one moves down the left side of the tree, one zooms in on the post-critically finite boundary point $t=1$. This tree is symmetrical, because $f_t^n(t) = -f_{-t}^n(-t)$, and so $f_t$ is PCB if and only if $f_{-t}$ is PCB. In Figure \ref{fig:fig2}, we depict the disk $\bar{D}(1, \frac{1}{8})$ and the tree that emanates from it to give a sense of the complexity of $\mathcal{M}_t$.

Note that as one zooms in on $t=1$, a self-similar pattern emerges, as illustrated in Figure \ref{fig:fig3} and as shown in Proposition \ref{bdrybehavior}. This is reminiscent of the classical Mandelbrot set over $\CC$ and its fractal-like boundary. Beginning at $\bar{D}(1, 2^{-(2k+1)})$ for any $k>1$, we see in Figure \ref{fig:fig3} that the pattern repeats every time we move two levels down the tree toward $1$. The disk $\bar{D}(1+2^{2k+2}, 2^{-(2k+3)})$ corresponds to non-PCB maps, while the disks $\bar{D}(1+3\cdot2^{2k+1}, 2^{-(2k+3)})$ and $\bar{D}(1+5\cdot2^{2k+1}, 2^{-(2k+4)})$ correspond to PCB maps. The parameter values in the disk $\bar{D}(1+2^{2k+1}, 2^{-(2k+4)})$, labeled in Figure \ref{fig:fig3} with a question mark, exhibit quite complicated behavior. Some data collected with SAGE shows that for many values of $k$ one often has to move twenty or more levels down the tree before one finds a disk that is entirely PCB or entirely non-PCB. Furthermore, the locations of these disks vary with $k$, and there is no apparent pattern. This is where the boundary of $\mathcal{M}_t$ seems to be most intricate.

Our examination of this one parameter family of cubic polynomials in $\QQ_2$ shows that $\mand$ can be quite complicated and interesting for $p < d$ and is an object worthy of further study.

\subsection*{Acknowledgements}
The author would like to thank her advisor, Joseph Silverman, for his guidance and advice, Robert Benedetto for an email exchange that prompted this project, the referee for his or her valuable feedback, and everyone involved in ICERM's semester program in complex and arithmetic dynamics for providing the stimulating research environment in which much of this work was completed.

\end{document}